\newtheorem{theorem}{Theorem}[section]
\newtheorem{lemma}[theorem]{Lemma}
\theoremstyle{definition}
\theoremstyle{remark}
\newtheorem{remark}[theorem]{Remark}
\theoremstyle{remark}
\numberwithin{equation}{section}
\newcommand{\ind}{{\bf 1}}
\def\inddd#1{{\ind}_{\left\{#1\right\}}} 
\newcommand{\proba}{\mathbb P}
\newcommand{\esp}{{\mathbb E}}
\newcommand{\inv}{^{-1}}
\newcommand{\cov}{{\rm{Cov}}}
\newcommand{\eqnh}{\begin{eqnarray*}}
\newcommand{\eqne}{\end{eqnarray*}}
\newcommand{\eqnhn}{\begin{eqnarray}}
\newcommand{\eqnen}{\end{eqnarray}}
\newcommand{\equh}{\begin{equation}}
\newcommand{\eque}{\end{equation}}
\def\summ#1#2#3{\sum_{#1 = #2}^{#3}}
\def\prodd#1#2#3{\prod_{#1 = #2}^{#3}}
\def\sif#1#2{\sum_{#1=#2}^\infty}
\newcommand{\eqd}{\stackrel{d}{=}}
\def\topp#1{^{(#1)}}
\def\abs#1{\left|#1\right|}
\def\pp#1{\left(#1\right)}
\def\mmid{\;\middle\vert\;}
\def\floor#1{\left\lfloor #1 \right\rfloor}
\def\qmand{\quad\mbox{ and }\quad}
\def\qmwith{\quad\mbox{ with }\quad}
\def\mfa{\mbox{ for all }}
\def\wt#1{\widetilde{#1}}
\def\wb#1{\overline{#1}}
\def\what#1{\widehat{#1}}
\def\limn{\lim_{n\to\infty}}
\def\weakto{\Rightarrow}
\def\Z{{\mathbb Z}}
\def\R{{\mathbb R}}
\def\N{{\mathbb N}} 
\def\D{{\mathbb D}}
\newcommand{\calP}{{\mathcal P}}
\newcommand{\calN}{\mathcal{N}}
\newcommand{\calM}{\mathcal{M}}
\newcommand{\calZ}{\mathcal{Z}}
\renewcommand{\d}{{\rm d}}
\newcommand{\aswto}{\stackrel{a.s.w.}\to}
\date{}
\def\R{\mathbb{R}}
\def\C{\mathbb{C}}
\def\N{\mathbb{N}}
\def\P{\mathbb{P}}
\def\1{\mathds{1}}
\renewcommand{\i}{{\rm i}}
\newcommand{\rmRe}{{\rm Re}}
\newcommand{\rmIm}{{\rm Im}}
\title[Permutations induced Chinese restaurant processes]{Limit theorems for random permutations  induced by Chinese restaurant processes}
\author{Jaime Garza}
\address
{
Department of Mathematical Sciences\\
University of Cincinnati\\
2815 Commons Way\\
Cincinnati, OH, 45221-0025, USA.
}
\email{garzaje@mail.uc.edu}
\author{Yizao Wang}
\address
{
Department of Mathematical Sciences\\
University of Cincinnati\\
2815 Commons Way\\
Cincinnati, OH, 45221-0025, USA.
}
\email{yizao.wang@uc.edu}
\begin{document}\sloppy
\begin{abstract}We investigate the random permutation matrices induced by the Chinese restaurant processes with $(\alpha,\theta)$-seating. When $\alpha=0,\theta>0$, the permutations are those following Ewens measures on symmetric groups, and have been extensively studied in the literature. Here, we consider $\alpha\in(0,1)$ and $\theta>-\alpha$. In an accompanying paper,  a functional central limit theorem is established for partial sum of weighted cycle counts in the form of $\summ j1n a_jC_{n,j}$, where $C_{n,j}$ is the number of $j$-cycles of the permutation matrix of size $n$. Two applications are presented. One is on linear statistics of the spectrum,  and the other is on the characteristic polynomials outside the unit circle.
\end{abstract}

\maketitle

\section{Introduction}
Many developments have been seen on the distributions of eigenvalues of random permutation matrices. It was pointed out by \citet{diaconis94eigenvalues} that in order to study these eigenvalues, it suffices to study the cycle structures of the random permutations, and for the latter many results and tools developed earlier can be of great help. Following this remark, \citet{wieand00eigenvalue} later investigated the asymptotic behavior of counting functions of eigenvalues of a uniform random permutations, and \citet{benarous15fluctuations} characterized limit fluctuations of general linear statistics of the spectrum of random permutation matrices following Ewens measures on symmetric groups with parameter $\theta>0$ (with $\theta=1$ these are the uniform random permutation matrices). The characteristic polynomials have also been investigated by \citet{hambly00characteristic}. Limit theorems for other statistics of random permutations following Ewens measures also exist in the literature, and we mention \citep{bahier19number,bahier22smooth,najnudel13distribution}.

We are interested in the asymptotic behavior of the random permutations induced by the Chinese restaurant processes \citep{pitman06combinatorial}. The Chinese restaurant processes have two parameters $(\alpha,\theta)$ (sometimes referred to as {\em with $(\alpha,\theta)$-seating}), and the induced sequence of permutations are referred to as $(\alpha,\theta)$-permutations throughout. In particular, it is well-known that an $(0,\theta)$-permutation follows Ewens measure with $\theta>0$. Here, our focus is in the regime 
\equh\label{eq:alpha}
\alpha\in(0,1) \qmand \theta>-\alpha.
\eque
From now on, by $(\alpha,\theta)$-seating or $(\alpha,\theta)$-permutation we implicitly assume \eqref{eq:alpha}; otherwise we will write $(0,\theta)$ instead.

A key difference between the case $\alpha=0$ and $\alpha\in(0,1)$ is the asymptotic behaviors of the cycle counts. For $(0,\theta)$-permutations the cycle counts converge in distribution {\em  without any normalization} to independent Poisson random variables, while for $(\alpha,\theta)$-permutations the cycle counts after appropriate normalization converge in distribution to dependent Gaussian random variables. As a consequence, the famous and powerful {\em Feller's coupling} for $(0,\theta)$-permutations (see e.g.~\citep{arratia03logarithmic}) no longer applies to $(\alpha,\theta)$-ones. 

To study $(\alpha,\theta)$-permutations, we shall exploit the well-known fact that Chinese restaurant processes are essentially equivalent to infinite urn models (with random frequencies) thanks to Kingman's representation theorem: in particular
 cycle counts (how many say $j$-cycles) of permutations can be identified with occupancy counts (how many urns have $j$ balls) of the Karlin model.
Then with $\alpha\in(0,1)$, the Chinese restaurant processes are essentially the Karlin model \citep{karlin67central,gnedin07notes} (where the frequencies in the infinite urn models decay at a regularly-varying rate with index $-1/\alpha$), and limit theorems for occupancy counts of Karlin model have been extensively investigated. 
In an accompanying paper \citep{garza24functional}, as an intermediate step we established a functional central limit theorem for general weighted occupancy processes, and we explain in Theorem \ref{thm:1} how this result can be applied to summations of weighted cycle counts of  $(\alpha,\theta)$-permutation matrices with $\alpha\in(0,1)$.  

The main contributions of this paper are, by applying the the functional central limit theorem established in \citep{garza24functional}, two limit theorems for $(\alpha,\theta)$-permutation matrices. The first is on linear statistics of eigenvalues. The second is on characteristic polynomials. In both examples, the statistics of interest can be represented as the sum of weighted cycle counts, and hence Theorem \ref{thm:1} applies directly. Our results are to be compared with counterpart ones for $(0,\theta)$-permutation matrices, which have been extensively studied in the literature. We mention representative results by  \citet{hambly00characteristic} and \citet{benarous15fluctuations}, and also a recent interesting development by \citet{coste24characteristic} (their main focus is on summation of $d$ i.i.d.~uniform permutation matrices but some discussions on $d=1$ have been very useful to us). More detailed reviews and comparisons are provided below.

A highlight of our results is that they are functional central limit theorems and in particular  they characterize the macroscopic temporal evolution of the eigenvalues, as the size of the matrix tends to infinity. A remarkable feature  of the $(0,\theta)$ and $(\alpha,\theta)$-random permutation matrices induced by the Chinese restaurant processes is that these matrices are intrinsically constructed as a sequence of temporally evolving matrices (with changing sizes) as the number of customers in the restaurant goes to infinity. Not many functional limit theorems are known for random matrices as it is a priori not clear how matrices of different dimensions are modeled jointly. In particular, Earlier results \citep{wieand00eigenvalue,benarous15fluctuations} concern only the asymptotic behavior of a sequence of permutation matrices not coupled in any sense. 
\begin{remark}It would be interesting to see if counterparts of our results hold for $(0,\theta)$-permutation matrices. Note that the Feller's coupling is not powerful enough to establish the {\em functional} central limit theorem of our type, since it represents the cycle structure at step $n$ by a simple sequence of Bernoulli random variables and loses track of the evolution of cycle structure as $n$ grows (see Remark \ref{rem:Feller no FCLT}; but the method allows to establish another type of functional central limit theorems as shown in \citet{hansen90functional}). 
 A more refined point-process convergence has been recently established by \citet{galganov24short} (actually their limit theorems apply to a much larger class of permutations), exploiting directly the definition of the permutation.  
At the same time, Kingman's representation theorem remains valid to translate permutations induced by $(0,\theta)$-seating into an infinite urn model (Karlin model with $\alpha=0$). Along this line we mention the recent paper by \citet{iksanov22small} (see also \citep[Proposition 4.1]{iksanov22functional}), where they investigated Karlin model with $\alpha=0$, and obtained new Gaussian processes as the scaling limits for various statistics including occupancy counts. They imposed a so-called $\Pi$-variation assumption on the decay of the frequencies that is not satisfied by the asymptotic frequencies induced by the $(0,\theta$)-seating. \end{remark}

One of the main motivations to study random permutation matrices came from the study of random unitary matrices (e.g.~\citep{diaconis94eigenvalues,hughes01characteristic}). 
While random unitary matrices are arguably much more important in random matrix theory, random permutation matrices have similar behaviors in certain aspects and yet have relatively simpler structure to deal with. For example, some similarities have been reported already when comparing fluctuations of characteristic polynomials in \citep{hambly00characteristic}. It is a natural question in general to examine and compare limit fluctuations of different random matrices. On the other hand, from the observation that the cycle counts of $(0,\theta)$ and $(\alpha,\theta)$-permutations have different types of asymptotic behaviors, one expects already the limit fluctuations of statistics of interest to be qualitatively different. Therefore, $(\alpha,\theta)$-permutation matrices are better viewed as examples `not too close' to $(0,\theta)$-ones. 

There are various extensions of $(0,\theta)$-permutations investigated in the literature. For models of a single matrix, we mention \citep{hughes13random,ercolani14cycle,najnudel13distribution}. Another extensively studied model is the summation of $d$ i.i.d.~copies of uniform permutation matrices, and in particular they correspond to random $d$-regular graphs; see \citep{dumitriu13functional,basak18circular}. For this model, some challenges in the extension to sum of i.i.d.~$(0,\theta)$-permutations are explained in \citep{coste24characteristic}. Investigating the summation of i.i.d.~$(\alpha,\theta)$-permutation matrices is another interesting question. Notice that as long as $d\ge 2$, the technique developed for a single matrix (Feller's coupling for $(0,\theta)$-permutations or the methodology relating to Karlin model for $(\alpha,\theta)$-permutations) no longer works, and the analysis becomes immediately much more involved. 


The paper is organized as follows. In Section \ref{sec:CRP} we review the Chinese restaurant processes and its induced permutations. In Section \ref{sec:FCLT} we prove a functional central limit theorem for weighted sums of cycle counts, which is a consequence of Kingman's representation theorem and the main result in \citep{garza24functional}. Section \ref{sec:linear} is devoted to limit theorems for linear statistics of eigenvalues. Section \ref{sec:log} is devoted to limit theorems for characteristic polynomials. 

\subsection*{Acknowledgements}
J.G.~and Y.W.~would like to thank Alexander Iksanov and Joseph Najnudel for several stimulating discussions. J.G.~and Y.W.~were partially supported by Army Research Office, US (W911NF-20-1-0139).
 Y.W.~was also partially supported by Simons Foundation (MP-TSM-00002359), and a Taft Center Fellowship (2024--2025) from Taft Research Center at University of Cincinnati.

\section{Chinese restaurant processes}\label{sec:CRP}

A Chinese restaurant process is a sequence of random partitions $(\Pi_n)_{n\in\N}$, each $\Pi_n$ a partition of $[n] = \{1,\dots,n\}$, determined by two parameters $(\alpha,\theta)$. The range $\alpha\in[0,1)$ and $\theta>-\alpha$ contains the most interesting cases.  The sequence of random partitions are constructed recursively as follows. Set $\Pi_1=\{\{1\}\}$ first. Suppose the construction of the first $n$ partitions $(\Pi_i)_{i=1,\dots,n}$ have been completed, and $\Pi_n$ having $k$ blocks of lengths $n_1,n_2,\dots,n_k>0$. This is often referred to as having $k$ tables each with $n_j$ customers, $j=1,\dots,k$, and $n_1+\cdots+n_k = n$. Then, set $\Pi_{n+1}$ by the following modification of $\Pi_n$:
        \begin{enumerate}[(i)]
        \item  placing $n+1$ in the $i$-th block of $\Pi_n$ with probability $(n_i-\alpha)/(n+\theta)$,
        \item  or creating a new block with $n+1$ with probability $(\theta +\alpha k)/(n+\theta)$,
    \end{enumerate}
    and continue the procedure iteratively.
The two actions are sometimes referred to as seating the $(n+1)$-th customer at $i$-th table or at a new table, respectively. 
This procedure yields a sequence of random partitions. In order to obtain a sequence of random permutations, one simply requires further that when a new customer sits at an occupied {\em round} table, he/she chooses a seat uniformly (say to the left of a randomly chosen customer), and the seating of all occupied round tables yield a cycle-representation of a permutation. (Having round tables as in many Chinese restaurants is convenient in this description, in order to relate to cycle structure of permutations.)
See \citep[Chapter 3]{pitman06combinatorial} for an overview of Chinese restaurant processes. We see very soon below that for the question of our interest, the exact seating is irrelevant, and it suffices to focus on the number of customers of each table.

The random partitions induced by a Chinese restaurant process have in particular two properties. First, each one is exchangeable, in the sense that for any permutation $\pi$ on $[n]$ 
 \begin{equation*}
     \P(\Pi_n=\{A_1,...,A_k\})=\P(\Pi_n=\{\pi(A_1),...,\pi(A_k)\}),
 \end{equation*}
 where $\pi(A):=\{\pi(j):j\in A\}$ and $A_1,\dots,A_k$ above is any disjoint union of $[n]$. Second, the family of random partitions are consistent, in the sense that for any $m,n\in\N, m<n$, the random partition $\Pi_m$ is exactly $\Pi_n$ restricted to $[m]$.  (That is, if $\Pi_n=\{A_1,...,A_k\}$, then  $\{A_1\cap [m],...,A_k\cap [m]\} = \Pi_m$ for all $m<n$.) Then, with these two properties the sequence $(\Pi_n)_{n\in\N}$ induce an exchangeable random partition on $\N$.

  Now let $(A_{n,i})_{i\in \N}$ denote the sequence of block sizes of $\Pi_n$ arranged in the order of appearance in the construction above, and set  $A_{n,i}=0$ if $\Pi_n$ has strictly fewer than $i$ blocks. Kingman's representation theorem for exchangeable random partitions of $\N$ \citep{kingman82coalescent}  ensures that the frequencies
 \begin{equation}\label{CRPFreq}
     P_i:=\lim_{n\rightarrow \infty} \frac{A_{n,i}}{n}, \hspace{.2in} i\in \N,
 \end{equation}
 exist almost surely, they are non-deterministic, strictly positive, and satisfy $\sif i1 P_i= 1$.  Moreover, one has the well-known stick-breaking representation of the joint law (recall that the model depends on fixed parameters $\alpha,\theta$):
 \[
 (P_1,P_2,\dots)\eqd \pp{B_1,\wb B_1B_2,\wb B_1\wb B_2B_3,\dots},
 \]
where $(B_i)_{i\in\N}$ are independent random variables with $B_i$ distributed as a beta random variable with parameter $(1-\alpha,\theta+i\alpha)$, and $\wb B_i := 1-B_i$. 
With $\alpha=0,\theta>0$, the induced random partitions are governed by Ewens' sampling formula, and the random permutations by Ewens measures on symmetric groups.

In this paper, we focus on $\alpha\in(0,1),\theta>-\alpha$. 
Let $(P_j^\downarrow)_{j\in\N}$ denote the non-increasing ordering of $(P_j)_{j\in\N}$. It is well-known that in this range,
  \begin{equation}\label{alphadivlim}
P_n^\downarrow\sim \pp{\frac{\mathsf s_{\alpha,\theta}}{\Gamma(1-\alpha)}}^{1/\alpha} n^{-1/\alpha}     \mbox{ almost surely,}
 \end{equation}
as $n\to\infty$, where $\mathsf s_{\alpha,\theta}$ is a random variable that can also be defined as the almost sure limit of $|\Pi_n|/n^\alpha$ as $n\to\infty$, with $|\Pi_n|$ being the total number of cycles of the partition $\Pi_n$. 
 (This can be found at for example \citep[Eq.~(3.40)]{pitman06combinatorial}, while Eq.~(3.35) and the formula of $Z$ right after from the same reference have a typo.)
The random variable  $\mathsf s_{\alpha,\theta}$ is strictly positive and is referred to as the {\em $\alpha$-diversity}, and it shall show up in our limit theorem later. In particular we observe that $\mathsf s_{\alpha,\theta}$ is measurable with respect to 
\equh\label{eq:calP}
\calP := \sigma\pp{(P_i^\downarrow)_{i\in\N}}.
\eque
 The distribution of $\mathsf s_{\alpha,\theta}$ follows a generalized Mittag--Leffler distribution with parameters $(\alpha,\theta)$. See \citep[Section 3.3]{pitman06combinatorial} and \cite[Section 3.2]{banderier24phase} for more details. 

\section{A functional central limit theorem for weighted sum of cycle counts}\label{sec:FCLT}
Let $\sigma$ be a generic permutation of $[n]$, we set the corresponding $n\times n$ matrix $M_n$ by $[M_n]_{i,j}= \inddd{\sigma(j) = i}$. We are interested in the eigenvalues of $M_n$, which can be related  to the cycle structure as follows. Let  $C_{n,j}$ denote the number of cycles of length $j$ (referred to as $j$-cycles below) of the permutation.  More specifically, if $C_{n,j}>0$, then each sub-matrix of $M_n$ corresponding to a $j$-cycle has eigenvalues 
\begin{equation*}
    1,e^{\i 2\pi /j},e^{\i 4\pi  /j},...,e^{\i2\pi  (j-1)/j}.
\end{equation*}
So each of these eigenvalues has multiplicity at least $C_{n,j}$, as it may also be the eigenvalue of another sub-matrix corresponding to a cycle with a different length. More precisely, each eigenvalue necessarily takes the value of $e^{\i 2\pi p/n}$  for some $p = 0,1,\dots,n-1$,  with multiplicity exactly (might be zero)
\[
\sum_{j\in\N:pj/n\in\N_0}C_{n,j}.
\]
It turned out that many statistics of interest concerning eigenvalues of a permutation matrix with cycle counts $(C_{n,1},\dots,C_{n,n})$ can be expressed as 
\equh\label{eq:weighted C}
S_{\vec a,n} :=\summ j1n a_jC_{n,j},
\eque
for some prescribed sequence of numbers $(a_j)_{j\in\N}$. For convenience, we assume $a_0 = 0$ throughout.

Now let us consider permutation matrices induced by Chinese restaurant processes. It is well-known that  for a $(0,\theta)$-permutation matrix of size $n$, we have
\equh\label{eq:Feller}
(C_{n,1},\dots,C_{n,n},0,\dots) \weakto \pp{W_1,W_2,\dots}
\eque
where on the right-hand side $(W_j)_{j\in\N}$ are independent Poisson random variables each with parameter $\theta/j$. Moreover, Feller's coupling provides a way to construct $(C_{n,1},\dots,C_{n,n})$ and $(W_1,\dots,W_n)$ on the same probability space and with an explicit rate on $\esp |C_{n,j}-W_j|\to 0$, which is very convenient when proving limit theorems, as then most of the limit theorems becomes ones for independent Poisson random variables.

For $(\alpha,\theta)$-permutations, instead of \eqref{eq:Feller} we have
\equh\label{eq:C conv}
\frac1{n^{\alpha/2}}\pp{C_{n,j} - \esp\pp{C_{n,j}\mid\calP}}_{j\in\N}\aswto \pp{\frac{\mathsf s_{\alpha,\theta}}{\Gamma(1-\alpha)}}^{1/2} \pp{\calZ_{\alpha,j}}_{j\in\N},
\eque
where the convergence is the almost sure weak convergence with respect to $\calP$ the $\sigma$-algebra of asymptotic frequencies (see \eqref{eq:calP}), and the limit Gaussian process $(\calZ_{\alpha,j})_{j\in\N}$ is defined as follows. 
Let $\calM_\alpha$ be a Gaussian random measure defined on $\R_+\times \Omega'$ with intensity measure $\alpha r^{-\alpha-1}\d r\d \P'$ and $(N'(t))_{t\geq0}$ be a standard Poisson process on $(\Omega',\P')$. 
     Set
\equh\label{eq:calZ_j}
\calZ_{\alpha,j}(t)\equiv \calZ_{\alpha,\vec a,j}(t):=\int_{\R_+\times\Omega'} \pp{\inddd{N'(rt)=j}-\proba'(N'(rt) = j)}\calM_\alpha(\d r,\d\omega'), t\ge 0.
\eque
We refer to \citep{samorodnitsky94stable} for background on stochastic integrals with respect to Gaussian  random measures. 
We just recall that if $\calM$ is a Gaussian random measure on $(E,\mathcal E)$ with control measure $\mu$, then $\int_E f\d \calM$ is a centered Gaussian random variable for all $f\in L^2(E,\mu)$ and $\cov(\int_E f\d\calM, \int_E g\d\calM) = \int_E fg\d\mu$ for all $f,g\in L^2(E,\mu)$. 
Moreover, assume throughout that the Gaussian random measure $\calM_\alpha$ is independent from $\calP$, and hence $\calZ_{\alpha,j}$ is independent from $\mathsf s_{\alpha,\theta}$. 
Central limit theorems for $(\alpha,\theta)$-permutations are commonly stated in the notion of almost sure weak convergence. 
Given a Polish space $(\mathcal X,d)$ and random elements $(X_n)_{n\in\N}, X$ in it, we say $X_n$ converges almost surely weakly to $X$ as $n\to\infty$ with respect to $\calP$, denoted by $X_n\aswto X$ with respect to $\calP$, if for all continuous and bounded functions $g$ on $\mathcal X$,
\[
\limn \esp \pp{g(X_n)\mid \calP} = \esp \pp{g(X)\mid\calP} \mbox{ almost surely.}
\]
For a slightly more general notion, see \citet{grubel16functional}.

From \eqref{eq:C conv}, the limit of weighted sum of cycle counts is easy to guess: it takes the form (up to a multiplicative random constant)
\equh\label{eq:calZ}
\calZ_{\alpha}(t)\equiv \calZ_{\alpha,\vec a}(t):=\int_{\R_+\times\Omega'} \pp{a_{N'(rt)}-\esp'a_{N'(rt)}}\calM_\alpha(\d r,\d\omega'), t\ge 0,
\eque
depending on $\vec a= (a_j)_{j\in\N}$ with $a_0 = 0$. Note that the integrand can also be written as
\[
a_{N'(rt)} - \esp'a_{N'(rt)} = \sif \ell1 a_\ell \pp{\inddd{N'(rt) = \ell} - \proba'(N'(rt) =\ell)}.
\]
It was shown in \citep{garza24functional} that if $
|a_j|\le Cj^\beta$ with $\beta\in[0,\alpha/2)$, this is a family of centered Gaussian random variables, and if furthermore 
\[
|a_{i+j}-a_i|\le Cj^\beta \mfa i,j\in\N,
\]
with $\beta\in[0,\alpha/2)$,
 then the process has a continuous version that is $\gamma$-H\"older-continuous for all $\gamma\in(0,\alpha/2)$. For all our applications later, the sequence $(a_j)_{j\in\N}$ turned out to be bounded (i.e.~$\beta=0$). Explicit formula for covariance function of $\calZ_\alpha$ can be found in \citep[Section 2]{garza24functional}, although the formula is complicated most of the time.

The convergence \eqref{eq:C conv} can be already read from the literature: it follows from \citet{karlin67central} and Kingman's representation theorem (for the latter we shall see another application in the proof of Theorem \ref{thm:1} below). However, from there to prove a limit theorem for \eqref{eq:weighted C}, which is provided in Theorem \ref{thm:1} below, turned out to be technically quite challenging, and the difficulty came essentially from proving tightness of the infinite series, as fully explained in \citep{garza24functional}.

\begin{theorem}\label{thm:1}
Consider \eqref{eq:weighted C} based on $(C_{n,j})_{n\in\N, j=1,\dots,n}$ from $(\alpha,\theta)$-permutation matrices. Assume that $(a_j)_{j\in\N}$ is a sequence of real numbers such that $|a_{i+j}-a_i|\le Cj^\beta$ for all $i,j\in\N$ for some $\beta\in[0,\alpha^2/2)$. Then, 
\equh\label{eq:1}
\pp{\frac{S_{\vec a,\floor {nt}} - \esp \pp{S_{\vec a,\floor{nt}}\mid\calP}}{n^{\alpha/2}}}_{t\in[0,1]}\aswto \pp{\frac{\mathsf s_{\alpha,\theta}}{\Gamma(1-\alpha)}}^{1/2}\pp{\calZ_{\alpha}(t)}_{t\in[0,1]}
\eque
with respect to $\calP$ in $D([0,1])$, where on the right-hand side the process $\calZ_{\alpha}$ is as in \eqref{eq:calZ}   and assumed to be independent from $\calP$. 
\end{theorem}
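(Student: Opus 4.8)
The plan is to deduce the functional CLT for the weighted sum $S_{\vec a,n}$ from the main result of the accompanying paper \citep{garza24functional} concerning weighted occupancy processes of the Karlin model, using Kingman's representation theorem as the dictionary. First I would recall the Kingman representation: the $(\alpha,\theta)$-permutation, through its block sizes $(A_{n,i})$, is built from i.i.d.\ samples with random frequencies $(P_i)$. The crucial structural identity to establish (or invoke) is that, conditionally on $\calP$, the cycle count $C_{n,j}$ of the induced permutation has the same law as the occupancy count $K_{n,j}$ of the Karlin model with deterministic frequencies $(P_i^\downarrow)$, i.e.\ the number of urns receiving exactly $j$ balls when $n$ balls are thrown independently with probabilities $P_i^\downarrow$. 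Since \eqref{eq:weighted C} only involves the cycle counts and not the seating, the reduction is clean: $S_{\vec a,n} \eqd \summ j1{} a_j K_{n,j}$ conditionally on $\calP$.

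Having made this identification, the second step is to apply the functional central limit theorem of \citep{garza24functional} to the weighted occupancy process $\summ j1{} a_j K_{\floor{nt},j}$ with deterministic frequencies satisfying the regular-variation condition \eqref{alphadivlim}. Here the hypothesis $|a_{i+j}-a_i|\le Cj^\beta$ with $\beta\in[0,\alpha^2/2)$ is exactly the regularity needed for that theorem to apply and to yield tightness in $D([0,1])$ together with convergence of finite-dimensional distributions to the Gaussian process $\calZ_\alpha$ of \eqref{eq:calZ}. The scaling $n^{\alpha/2}$ and the constant $(\mathsf s_{\alpha,\theta}/\Gamma(1-\alpha))^{1/2}$ arise from the regularly-varying tail of the frequencies with index $-1/\alpha$: the random multiplicative factor $\mathsf s_{\alpha,\theta}$ enters precisely because the frequencies $P_n^\downarrow$ have the prefactor $(\mathsf s_{\alpha,\theta}/\Gamma(1-\alpha))^{1/\alpha}$ in \eqref{alphadivlim}, and this prefactor is $\calP$-measurable so it can be treated as a deterministic constant once we condition on $\calP$.

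The passage from a deterministic-frequency statement to the almost-sure-weak-convergence statement with respect to $\calP$ is the step requiring care. The theorem in \citep{garza24functional} presumably fixes a deterministic regularly-varying frequency sequence; I would verify that its conclusions hold uniformly enough, or apply it pathwise, along the realizations of $(P_i^\downarrow)_{i\in\N}$, which by \eqref{alphadivlim} satisfy the regular-variation hypothesis almost surely. Conditioning on $\calP$ freezes the frequencies, the limiting Gaussian measure $\calM_\alpha$ is taken independent of $\calP$ by assumption, and the random scaling constant factors out; then verifying $\limn\esp(g(X_n)\mid\calP)=\esp(g(X)\mid\calP)$ almost surely for bounded continuous $g$ amounts to checking that the deterministic-frequency FCLT holds for $\P$-almost every realization of the frequencies.

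The main obstacle I expect is precisely this reduction and the pathwise application: ensuring that the regularly-varying behavior \eqref{alphadivlim}, which holds almost surely with a \emph{random} slowly-varying correction built into $\mathsf s_{\alpha,\theta}$, falls within the hypotheses of the occupancy-count FCLT for a fixed frequency sequence, and that the convergence there is robust enough to be read off pathwise. Establishing that the limit obtained from the deterministic theorem indeed separates into the product of the $\calP$-measurable constant $(\mathsf s_{\alpha,\theta}/\Gamma(1-\alpha))^{1/2}$ and the $\calP$-independent process $\calZ_\alpha$ is the technical heart; tightness of the infinite series, as the authors remark, is already handled in \citep{garza24functional}, so here the burden is the translation through Kingman's theorem and the careful handling of the conditional (almost sure weak) convergence rather than any fresh probabilistic estimate.
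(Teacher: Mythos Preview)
Your proposal is correct and follows essentially the same approach as the paper: reduce cycle counts to occupancy counts via Kingman's representation theorem (the paper denotes your $K_{n,j}$ by $D_{n,j}$), then invoke \citep[Theorem~1.1]{garza24functional} pathwise after conditioning on $\calP$, checking that the almost-sure asymptotic \eqref{alphadivlim} supplies the required regular-variation hypothesis with the $\calP$-measurable constant $\mathsf C_0=(\mathsf s_{\alpha,\theta}/\Gamma(1-\alpha))^{1/\alpha}$. The paper's proof is shorter only because it asserts that the extension of the deterministic-frequency FCLT to random frequencies and almost sure weak convergence is ``immediate''; your more explicit discussion of the pathwise application and the factoring of the $\calP$-measurable scaling is exactly the content of that assertion.
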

\begin{proof}
The joint law of 
\[
(C_{n,1},\dots,C_{n,n})
\]
is related to occupancy statistics from an infinite urn scheme via Kingman's representation theorem. We first recall the setup of an infinite urn model with sampling frequencies $(p_j)_{j\in\N}$ (so $p_j>0$ and $\sif j1 p_j = 1$). Consider i.i.d.~random variables $(Y_i)_{i\in\N}$ with  $\proba(Y_i = j) = p_j,j\in\N$ representing the label of urn the ball is thrown into at each round, and by time $n$ the following statistics:
\equh\label{eq:D}
K_{n,\ell} := \summ i1n \inddd{Y_i = \ell}, \quad \ell\in\N\qmand
D_{n,j}:=\sif \ell1 \inddd{K_{n,\ell} = j}, \quad j\in\N. 
\eque
Recall also the asymptotic frequencies $(P_i)_{i\in\N}$ in \eqref{CRPFreq} and $\calP$ in \eqref{eq:calP}, and consider the infinite urn scheme with random sampling frequencies $(P_i)_{i\in\N}$. That is, the random variables $(Y_i)_{i\in\N}$ now are conditionally independent and identically distributed given $\calP$, following
\equh\label{eq:Y conditional}
\proba(Y_i = j\mid\calP) = P_j, \mbox{ almost surely, }\quad j\in\N.
\eque
 Then, Kingman's representation theorem says that the law of $(C_{n,1},\dots,C_{n,n})_{n\in\N}$ given $\calP$ is the same as the law of $(D_{n,1},\dots,D_{n,n})_{n\in\N}$ from an infinite urn scheme with sampling frequencies $(P_i)_{i\in\N}$ (i.e., given $\calP$ we first sample $(Y_i)_{i\in\N}$ via \eqref{eq:Y conditional} and then consider $D_{n,j}$ as in \eqref{eq:D}). In particular, for continuous and bounded functions $f:\R^n\to\R$, 
\[
\esp \pp{f(C_{n,1},\dots,C_{n,n})\mmid\calP} = \esp \pp{f(D_{n,1},\dots,D_{n,n})\mmid\calP},
\]
almost surely. Then, the limit theorem concerning $\summ j1n a_jC_{n,j}$ becomes a limit theorem concerning 
\[
\summ j1n a_jD_{n,j},
\]
under the conditional law of $(D_{n,1},\dots,D_{n,n})$ with respect to $\calP$. 

Notice that \citep[Theorem 1.1]{garza24functional} is a limit theorem for $\summ j1n a_j (D_{n,j}- \esp(D_{n,j}\mid\calP))$ (for almost sure weak convergence; in fact the main result therein is for Karlin model with fixed sampling frequencies, but the extension to one with random frequencies is immediate), which now translates into a limit theorem for $\summ j1n a_j(C_{n,j}-\esp (C_{n,j}\mid\calP))$. It remains to check that the decaying rate of $(P_j^\downarrow)_{j\in\N}$ (that is, the frequencies $(P_j^\downarrow)_{j\in\N}$ ordered in non-increasing order) satisfies the assumption therein. In particular, we need
\[
P_j^\downarrow\sim \mathsf C_0j^{-1/\alpha},
\]
as $j\to\infty$ for some constant $\mathsf C_0>0$. It is clear that here, the constant $\mathsf C_0$ is allowed to be a random variable measurable with respect to $\calP$ to apply the theorem, and it is indeed the case as reviewed in  \eqref{alphadivlim}.
Therefore \eqref{eq:1} follows from \citep[Theorem 1.1]{garza24functional} (note also that in the notations therein, $\sigma_n \sim (\mathsf C_0^\alpha n^\alpha)^{1/2}$). 
\end{proof}

\subsection{Comparison with \texorpdfstring{$(0,\theta)$}{theta}-permutations}
\citet{benarous15fluctuations} when studying the linear statistics of eigenvalues also established limit theorems for weighted sum of cycle counts $\summ j1n a_jC_{n,j}$ for $(0,\theta)$-permutations. We provide a comparison.
\begin{remark}\label{rem:Feller no FCLT}
We first explain briefly why Feller's coupling does not lead to a functional central limit theorem of the type of Theorem \ref{thm:1} immediately. Recall that the Feller's coupling for $(0,\theta)$-permutations (more precisely the coupling is on the cycle structure) says the following. With a sequence of independent $(\xi_i)_{i\in\N}$ each distributed as a Bernoulli random variable with parameter $\theta/(\theta+i-1)$, (i) the random variables
\[
\what C_{n,j}:=\summ i1{n-j}\xi_i(1-\xi_{i+1})\cdots(1-\xi_{i+j-1})\xi_{i+j} + \xi_{n-j+1}(1-\xi_{n-j+2})\cdots(1-\xi_n), j=1,\dots,n,
\]
have the same joint law as $C_{n,1},\dots,C_{n,n}$, (ii) the random variables $W_j:=\sif i1\xi_i(1-\xi_{i+1})\cdots(1-\xi_{i+j-1})\xi_{i+j}, j\in\N$ are independent Poisson random variables each with parameter $\theta/j$, and (iii) the rate $\esp |C_{n,j}-W_j|\to 0$ can be computed explicitly. Exploiting Feller's coupling, formally we have
\equh\label{eq:Poisson regime}
 \summ j1n a_j(C_{n,j} - \esp C_{n,j}) \approx \summ j1n a_j (W_j - \esp W_j)\weakto \sif j1 a_j\pp{W_j - \frac\theta j}.
\eque
For the right-hand side to converge, the Kolmogorov's three series theorem says the necessary and sufficient condition is $\sif j1 a_j^2/j<\infty$, and this condition turns out to be sufficient to make the step `$\approx$' rigorous. (One can also show the corresponding convergence in \eqref{eq:Poisson regime} without centering by essentially the same argument.)

However, Feller's coupling cannot apply to establish the functional central limit theorem of our interest: it suffices to compare $\what C_{\floor{ns},j}$ and $\what C_{\floor{nt},j}$ with $s<t$. By construction, we have $\what C_{\floor{nt},j}\ge \what C_{\floor{ns},j}-1$ almost surely. But clearly for $C_{\floor{ns},j}$ and  $C_{\floor{nt},j}$ the corresponding relation does not hold: there is a non-zero probability for the event $C_{\floor{nt},j} = 0$, regardless the value of $C_{\floor{ns},j}$. 
\end{remark}
A phase transition for limit theorems of $\summ j1n a_jC_{n,j}$ was revealed in \citep{benarous15fluctuations}. The authors characterized two phases completely therein (their Theorems 2.3 and 2.4 are the first two cases below), while in fact there is also a third phase. 
(Their $R_j(f)$ is the same as ours below in \eqref{eq:a_j(f)}, while our $a_j \equiv a_j(f) = jR_j(f)$ corresponds to their $u_j$ used in the proofs.) 
\begin{enumerate}[(i)]
\item When $\sif j1 a_j^2/j<\infty$, and when $\theta\in(0,1)$ under the additional assumption that $|a_j|$ converges to zero in the Cesaro $(C,\theta)$ sense (see their Section 2 for more details), by the argument around \eqref{eq:Poisson regime} we have
\[
\sif j1 a_j (C_{n,j} - \esp C_{n,j}) \weakto \sif j1 a_j\pp{W_j - \frac\theta j}.
\]
\item When $\sif j1 a_j^2/j = \infty$ and 
\equh\label{eq:2.10}
\max_{j=1,\dots,n}|a_j| = o\pp{\pp{\summ j1n a_j^2/j}^{1/2}},
\eque
we have
\[
\frac1{\sigma_n}\summ j1n a_j (C_{n,j} - \esp C_{n,j})\weakto \calN(0,1),
\]
where $\sigma_n = (\summ j1n a_j^2/j)^{1/2}$ and the right-hand side above denotes the standard normal distribution. 
\item Once \eqref{eq:2.10} is violated, and the limit is no longer Gaussian; see \citep{garza24solo}. Note that \eqref{eq:2.10} is in fact very restrictive: if $a_j\sim Cj^\delta$ for any $\delta>0$, one readily checks that \eqref{eq:2.10} is violated. 
\end{enumerate}

We now see that for $(\alpha,\theta)$-permutations the phase transition is drastically different from the $(0,\theta)$-ones. First, there is no regime of the weighted sum of Poisson random variables as in their model. That regime is essentially a consequence of the Poisson convergence of cycle counts \eqref{eq:Feller}, as explained around \eqref{eq:Poisson regime}. It is clear that for $(\alpha,\theta)$-permutations this regime does not exist.

Our main theorem formally corresponds to their second phase above, the Gaussian one, and yet the conditions are quite different.  Note that our assumption $|a_j|\le Cj^\beta$ with $\beta<\alpha^2/2$ (it is plausible that the condition can be relaxed to $\beta<\alpha/2$ although we have not found a proof) implies $\sif j1 a_j^2/j^{\alpha+1}<\infty$. We expect a phase transition for $(\alpha,\theta)$-permutations when $\sif j1 a_j^2/j^{\alpha+1} = \infty$, and this question is left for a future study.

\section{Limit fluctuations for linear statistics of eigenvalues}\label{sec:linear}
Let $M_n \equiv M_n\topp{\alpha,\theta}$ be an $(\alpha,\theta)$-permutation matrix. 
For each $M_n$, let 
\[
e^{\i2\pi\theta_{n,1}},\dots,e^{\i2\pi\theta_{n,n}}
\]
represent its $n$ eigenvalues, with $\theta_{n,j}\in[0,1),j=1,\dots,n$. 
Let $f:[0,1)\to \R$ be a test function. We are interested in the asymptotic behavior of the following linear statistics of the spectrum of a matrix $M_n$:
\[
S_n(f):=\summ i1n f(\theta_{n,i}).
\]
Alternatively, exploiting the relation between cycle structure and the related eigenvalues, we have
\[
S_n(f) = \summ j1n C_{n,j}\summ k0{j-1}f\pp{\frac kj}.
\]
We recognize a Riemann sum on the right-hand side.
Introduce
\equh\label{eq:a_j(f)}
a_j(f):=jR_j(f) \qmwith R_j(f) := \frac1j \summ k0{j-1}f\pp{\frac kj} - \int_0^1f(x)\d x, j\in\N.
\eque
We then have
\[
S_n(f) = \summ j1n C_{n,j} jR_j(f) + \summ j1n C_{n,j} j \int_0^1 f(x)\d x = \summ j1n a_j(f) C_{n,j}  +  n\int_0^1 f(x)\d x.
\]
That is, we can express
\[
S_n(f) - \esp (S_n(f)\mid\calP) = \summ j1n a_j(f) (C_{n,j} - \esp (C_{n,j}\mid\calP)),
\]
and we are in the exact situation to apply Theorem \ref{thm:1}.  The following is an immediate corollary.

\begin{theorem}
Suppose there exists $\beta\in[0,\alpha^2/2)$ such that $|a_{i+j}(f) - a_i(f)|\le Cj^\beta$ for all $i,j\in\N$. Then, 
\[
\pp{\frac{S_{\floor {nt}}(f) - \esp \pp{S_{\floor{nt}}(f)\mid\calP}}{n^{\alpha/2}}}_{t\in[0,1]}\aswto \pp{\frac{\mathsf s_{\alpha,\theta}}{\Gamma(1-\alpha)}}^{1/2}\pp{\calZ_{\alpha}(t)}_{t\in[0,1]}
\]
with respect to $\calP$ in $D([0,1])$, where on the right-hand side the process $\calZ_{\alpha}$ is as in \eqref{eq:calZ} with $a_j = a_j(f)$  and assumed to be independent from $\calP$. 
\end{theorem}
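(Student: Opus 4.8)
The plan is to treat this statement exactly as the excerpt advertises it, namely as an immediate corollary of Theorem \ref{thm:1}; the entire argument reduces to matching hypotheses and invoking that theorem with the weight sequence $\vec a = (a_j(f))_{j\in\N}$. No new probabilistic content is required, since all the analytic work---tightness in $D([0,1])$, identification of the Gaussian limit $\calZ_\alpha$, and the almost sure weak convergence with respect to $\calP$---is already carried by Theorem \ref{thm:1}, which itself rests on \citep[Theorem 1.1]{garza24functional}.

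First I would record the algebraic decomposition derived just above the statement. Using the elementary identity $\summ j1n jC_{n,j} = n$ (the cycle lengths of a permutation of $[n]$ sum to $n$), one has
\[
S_n(f) = \summ j1n a_j(f)C_{n,j} + n\int_0^1 f(x)\d x,
\]
with $a_j(f) = jR_j(f)$ as in \eqref{eq:a_j(f)}. The second term is deterministic, hence $\calP$-measurable, so it is left unchanged by conditioning on $\calP$ and cancels upon centering:
\[
S_n(f) - \esp\pp{S_n(f)\mid\calP} = \summ j1n a_j(f)\pp{C_{n,j} - \esp\pp{C_{n,j}\mid\calP}}.
\]
The right-hand side is precisely $S_{\vec a,n} - \esp(S_{\vec a,n}\mid\calP)$ for $\vec a = (a_j(f))_{j\in\N}$, and the same identity holds verbatim with $n$ replaced by $\floor{nt}$ for each $t\in[0,1]$; thus the two centered processes in $D([0,1])$ coincide.

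Next I would verify the bookkeeping needed to apply Theorem \ref{thm:1}. The sums defining both $S_n(f)$ and $S_{\vec a,n}$ start at $j=1$, and the blanket convention $a_0 = 0$ fixed in Section \ref{sec:FCLT} is in force; this is exactly what makes the limit integrand $a_{N'(rt)} - \esp' a_{N'(rt)}$ in \eqref{eq:calZ} well-posed at $N'(rt)=0$. The increment hypothesis $|a_{i+j}(f) - a_i(f)|\le Cj^\beta$ with $\beta\in[0,\alpha^2/2)$ is assumed in the present statement and is identical to the hypothesis consumed by Theorem \ref{thm:1}, so nothing further must be derived. With these in hand, Theorem \ref{thm:1} applies directly to $\vec a = (a_j(f))_{j\in\N}$ and yields the stated functional central limit theorem, the limit $\calZ_\alpha$ being \eqref{eq:calZ} with $a_j = a_j(f)$.

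I do not anticipate a genuine obstacle, precisely because the statement is a corollary: the only thing that could fail is a mismatch between the hypothesis imposed here (on $a_j(f)$) and the one required by Theorem \ref{thm:1}, and these have been arranged to agree. The only place where nontrivial work could reappear is if one preferred to phrase the hypothesis directly in terms of regularity of $f$: translating, say, piecewise-$C^1$ or bounded-variation smoothness of $f$ into the increment bound $|a_{i+j}(f) - a_i(f)|\le Cj^\beta$ would then be the substantive (but elementary) step. As the theorem is phrased, that translation is deferred, and the proof is a one-line appeal to Theorem \ref{thm:1}.
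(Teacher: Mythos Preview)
Your proposal is correct and matches the paper's approach exactly: the paper presents this theorem as an immediate corollary of Theorem~\ref{thm:1}, with the only preparatory work being the algebraic decomposition $S_n(f) - \esp(S_n(f)\mid\calP) = \summ j1n a_j(f)(C_{n,j} - \esp(C_{n,j}\mid\calP))$ that you also record.
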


It remains to discuss which class of test functions $f$ are such that $(a_j(f))_{j\in\N}$ satisfies the assumption therein. 
The simplest example of $f$ is probably $f(x) = \inddd{x\in[c_1,c_2)}$ for some $0\le c_1<c_2\le 1$. In this case, $S_n(f)$ is  the counting number of eigenvalues falling in the arc $[2\pi c_1,2\pi c_2)$, and $R_j(f) = (\{jc_1\}-\{jc_2\})/j$ and hence $|a_j(f)| \le 1$. 
\citet{wieand00eigenvalue} first investigated this question for uniform random permutation matrices, and \citet{benarous15fluctuations} considered the question for general test functions $f$. 
We just mention one more class of test functions: if $f\in W^{1,p}$ (the space of absolutely continuous function $f:[0,1]\to\R$ with $f'\in L^p([0,1])$), then $a_j(f)\to 0$ \citep{cruz-uribe02sharp}. For more discussions, see \citep[Section 3, in particular Lemma 3.1]{benarous15fluctuations}. In particular, conditions with $|a_j(f)|\le Cj^\beta$ for some $\beta>0$ can be related to the regularity of test function $f$; however it is not clear to us whether a control on $|a_{i+j}(f)-a_i(f)|\le C j^\beta$ can also be related. 
\section{Limit fluctuations for characteristic polynomials}\label{sec:log}
Let $M_n \equiv M_n\topp{\alpha,\theta}$ be a random permutation matrix corresponding to the $(\alpha,\theta)$-seating, and $\lambda_1(M_n),\dots,\lambda_n(M_n)$ be its eigenvalues. 
Consider the (transformed) characteristic polynomial
\[
\what\chi_n(z):= \det(I-zM_n) = \prodd j1n \pp{1-z\lambda_j(M_n)} = \prodd j1n\pp{1-z^j}^{C_{n,j}},
\]
where in the last equality we recall that each $j$-cycle contributes a factor $(1-z^j)$ in the characteristic polynomial. 
Write the empirical spectral distribution $\mu_n :=n\inv\summ j1n\delta_{\lambda_j(M_n)}$. It is known that $\mu_n$ converges to the uniform probability measure on the unit circle of $\C$, denoted by $\partial\mathbb D_1$ where $\D_r :=\{z\in\C: |z|<r\}$ is the open disc in the complex plane of radius $r>0$ \citep{najnudel13distribution}.

It is natural to consider
\[
\log\what\chi_n(z) = \summ j1n\log\pp{1-z^{j}}C_{n,j}, z\in\D_1,
\]
where the value of $\log(1-z^j)$ is taken as the principle branch with $\log 1 = 0$. 
The most interesting question is the limit fluctuations of $\what \chi_n(z)$ with $z\in\partial\D_1$. However, our Theorem \ref{thm:1} does not include this case since $a_j(z) = \log(1-z^j)$ is unbounded for $z$ irrational (see Remark \ref{rem:circle} for more discussions). Instead, we have an application of Theorem \ref{thm:1} for $z\in\D_1$ and $z\in\C\setminus\wb\D_1$.
Write
\equh\label{eq:eta}
\eta_\alpha(z,t):=\int_{\R_+\times\Omega'} \pp{\what\log\pp{ 1-z^{N'(rt)}}-\esp'\what\log\pp{1-z^{N'(rt)}}}
\calM_\alpha(\d r,\d\omega'), 
\eque
for $t\ge0, z\in\D_1$, 
where $\what \log (1-z)  = 0$ if $z=1$ (this occurs only when $N'(rt) = 0$ and when computing $\esp'\what\log(1-z^{N'(rt)})$), and otherwise $\what \log (1-z) = \log (1-z)$.
Writing the integrand on the right-hand side of \eqref{eq:eta} as $g_{t,z}(r,\omega')$, we understand  $\eta_\alpha(z,t) = \rmRe\eta_\alpha(z,t)+\i \rmIm\eta_\alpha(z,t)$ with 
\[
\pp{\rmRe\eta_\alpha(z,t),\rmIm\eta_\alpha(z,t)} = \pp{\int \rmRe g_{z,t}\d
\calM_\alpha, \int \rmIm g_{z,t}\d
\calM_\alpha},
\]
and in particular the real and imaginary parts are not independent. For complex random variables $(Z_n)_{n\in\N}$ and $Z$, we write $Z_n\weakto Z$ if $(\rmRe Z_n,\rmIm Z_n)\weakto (\rmRe Z,\rmIm Z)$ as random vectors, and similar interpretations for convergence of complex-valued stochastic processes.

Alternatively, using the expression of $\calZ_{\alpha,j}(t)$ in \eqref{eq:calZ_j}, we have
\equh\label{eq:weighted Z_j}
\eta_\alpha(z,t) = \sif j1 \log(1-z^j)\calZ_{\alpha,j}(t).
\eque
\begin{theorem}\label{thm:ch poly}
We have
\equh\label{eq:log p}
\pp{\frac{{\log\what\chi_{\floor{nt}}(z) - \esp (\log\what\chi_{\floor{nt}}(z)\mid\calP)}}{n^{\alpha/2}}}_{z\in\D_1,t\in[0,1]}
\aswto\pp{\frac{\mathsf s_{\alpha,\theta}}{\Gamma(1-\alpha)}}^{1/2}\pp{\eta_\alpha(z,t)}_{z\in\D_1,t\in[0,1]}.
\eque
as $n\to\infty$, where the convergence is in the sense that for all any $z_1,\dots,z_d$ from the prescribed domain, we have the almost sure weak convergence with respect to $\calP$ of $d$ stochastic processes indexed by $(z_j,t), t\in[0,1]$ for $j=1,\dots,d$ in the space $D([0,1])^d$.
\end{theorem}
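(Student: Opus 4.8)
The plan is to recognize $\log\what\chi_n(z)$ as a weighted sum of cycle counts and to reduce the statement to Theorem \ref{thm:1} via a Cram\'er--Wold argument, the only new features being that the weights are complex and that joint convergence over several points $z_1,\dots,z_d$ is asked for. Since each $j$-cycle contributes a factor $(1-z^j)$, we have $\log\what\chi_n(z) = \summ j1n \log(1-z^j)\,C_{n,j}$, so $\log\what\chi_n$ is exactly $S_{\vec a,n}$ with the complex weights $a_j(z) = \log(1-z^j)$. First I would fix $z_1,\dots,z_d\in\D_1$ and check the weight hypothesis. Because $|z_k|<1$, one has $|\log(1-z_k^j)|\le C_k|z_k|^j$ for large $j$, so the sequences $(\rmRe\, a_j(z_k))_j$ and $(\rmIm\, a_j(z_k))_j$ are bounded with geometrically decaying increments; hence $|a_{i+j}(z_k)-a_i(z_k)|\le C$ for all $i,j$, which is the hypothesis of Theorem \ref{thm:1} with $\beta=0$, comfortably inside the admissible range.

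The complex, vector-valued nature of the claim is then handled by the Cram\'er--Wold device. For arbitrary real coefficients $\mu_1,\nu_1,\dots,\mu_d,\nu_d$, the real process
\[
\summ k1d \pp{\mu_k\,\rmRe\log\what\chi_{\floor{nt}}(z_k) + \nu_k\,\rmIm\log\what\chi_{\floor{nt}}(z_k)}
\]
is again a weighted sum of cycle counts $S_{\wt a,\floor{nt}}$, now with the \emph{real} weights $\wt a_j := \summ k1d \pp{\mu_k\,\rmRe\log(1-z_k^j) + \nu_k\,\rmIm\log(1-z_k^j)}$. Being a finite linear combination of sequences satisfying the bound from the previous step, the $\wt a_j$ satisfy $|\wt a_{i+j}-\wt a_i|\le C$, so Theorem \ref{thm:1} applies and yields the functional central limit theorem for this projection, with limit $(\mathsf s_{\alpha,\theta}/\Gamma(1-\alpha))^{1/2}\calZ_{\alpha,\wt a}$. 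By the linearity of the stochastic integral against $\calM_\alpha$ in \eqref{eq:calZ} and the representations \eqref{eq:eta}--\eqref{eq:weighted Z_j}, this limit equals $(\mathsf s_{\alpha,\theta}/\Gamma(1-\alpha))^{1/2}$ times $\summ k1d\pp{\mu_k\,\rmRe\eta_\alpha(z_k,\cdot) + \nu_k\,\rmIm\eta_\alpha(z_k,\cdot)}$, i.e.\ the matching linear combination of the coordinates of the proposed limit.

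It remains to promote convergence of all real one-dimensional projections into joint functional convergence of the $\R^{2d}$-valued process in $D([0,1])^d$, in the almost-sure-weak sense with respect to $\calP$. For the finite-dimensional distributions I would fix times $0\le t_1<\cdots<t_m\le 1$ and pass through conditional characteristic functions: for each fixed coefficient vector, evaluating the functional limit of the previous step at $t_1,\dots,t_m$ gives the almost sure convergence of the conditional characteristic function of the corresponding real projection at any real argument. Choosing a countable dense family of projections and arguments produces a single almost sure event on which all of them converge simultaneously; the uniform equicontinuity of characteristic functions, together with the coordinatewise tightness already supplied by Theorem \ref{thm:1}, extends the convergence to every projection and hence yields conditional convergence of the finite-dimensional distributions via the (conditional) Cram\'er--Wold theorem. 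Joint tightness of the vector process in the product Skorokhod space follows from coordinatewise tightness. Finally, the limit coordinates $\eta_\alpha(z_k,\cdot)$ are continuous — the series \eqref{eq:weighted Z_j} converges because $\log(1-z_k^j)$ decays geometrically and each $\calZ_{\alpha,j}$ is continuous — so the multidimensional $J_1$ topology presents no difficulty (jumps need not be synchronized against a continuous limit), and the coordinatewise identification is the correct joint limit.

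I expect the main obstacle to be precisely this last passage: making the Cram\'er--Wold reduction rigorous under \emph{almost sure} weak convergence with respect to $\calP$, where the exceptional null set depends on the chosen projection and must be controlled uniformly across a countable dense family before being extended by continuity, while simultaneously confirming that coordinatewise tightness plus continuity of the limit suffices for convergence in $D([0,1])^d$. By contrast, the verification of the weight hypothesis for $z\in\D_1$ is immediate from geometric decay and is not where the difficulty lies; the genuinely delicate point in this paper is instead the excluded boundary case $z\in\partial\D_1$, where $a_j(z)=\log(1-z^j)$ is unbounded for irrational $z$ and Theorem \ref{thm:1} no longer applies.
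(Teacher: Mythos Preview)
Your proposal is correct and follows essentially the same approach as the paper: recognize $\log\what\chi_n(z)=\summ j1n\log(1-z^j)C_{n,j}$, note that for $z\in\D_1$ the real and imaginary parts of the weights are bounded so Theorem~\ref{thm:1} applies with $\beta=0$, and invoke the Cram\'er--Wold device for the joint convergence over $z_1,\dots,z_d$. You are in fact more careful than the paper, which leaves implicit the measure-theoretic bookkeeping for the almost-sure-weak convergence and the passage to $D([0,1])^d$ that you spell out.
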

For $z\in\C\setminus\wb\D_1$, we can write 
\[
\what\chi_n(z) = \prodd j1n (z^j(z^{-j}-1))^{C_{n,j}} =  z^n\prodd j1n (z^{-j}-1)^{C_{n,j}}.
\] 
The term $z^n$ corresponds to a drift term when studying $\log\what\chi_n(z)$, and the analysis of the product on the right-hand side above is essentially the same as the one of $\what\chi_{n}(z\inv)$. We omit the details.
\begin{proof} Note that $\log\what\chi_{\floor{nt}}(z)$ is a complex-valued random variable, and to show its convergence we show the convergence of
\begin{multline*}
\pp{\rmRe\pp{\log\what\chi_{\floor{nt}}(z)},\rmIm\pp{\log\what\chi_{\floor{nt}}(z)}} \\
= \pp{\summ j1{\floor{nt}}\rmRe\pp{\log\pp{1-z^{j}}}C_{\floor{nt},j},\summ j1{\floor{nt}}\rmIm\pp{\log\pp{1-z^{j}}}C_{\floor{nt},j}},
\end{multline*}
after normalization, as a stochastic process indexed by $t\ge 0$ for each $z\in\D_1$ fixed.  This convergence then follows from Theorem \ref{thm:1} (note that it is crucial we need $z\in\D_1$ so that $\log|1-z^j|$ is bounded for all $j\in\N$), and the joint convergence in $z$
 follows from the Cram\'er--Wold device.
\end{proof}

We see the limit behaviors are drastically different from the corresponding ones of  $(0,\theta)$-permutations. 
First, we can rewrite $\eta_\alpha(z,t)$ as
\[
\eta_\alpha(z,t)  = \sif j1 \log(1-z^j)\calZ_{\alpha,j}(t) = -\sif j1\sif k1 \frac{z^{jk}}k j\calZ_{\alpha,j}(t) = -\sif \ell1 \frac{z^\ell}{\ell}\sum_{j\mid \ell}j\calZ_{\alpha,j}(t). 
\]
Then, \eqref{eq:log p} is to be compared with the $(0,\theta)$-seating. In this case, we know that 
\equh\label{eq:ch p Ewens}
 \log \what\chi_n(z) =   \summ j1n\log(1-z^j)C_{n,j}\weakto \sif j1 \log(1-z^j)W_j = -\sif \ell1\frac{z^\ell}\ell\sum_{j | \ell}j W_j,
\eque
for $z\in\D_1$, 
where $\{W_j\}_{j\in\N}$ are independent Poisson random variables with parameter $\theta/j$ respectively.   We could not locate the first appearance of this result. This convergence follows from the Poisson convergence of the cycle counts. A much more involved convergence for sum of $d$ i.i.d.~$(0,1)$-permutation matrices can be found in \citep{coste24characteristic}. For the limit random series, see also \citep{coste23sparse}.

Note that from \eqref{eq:ch p Ewens} one could see certain logarithmic correlation structure on the circle $\partial\D_1$, which has been known to be a common theme for several random matrix models. Indeed, one can approximate $\sif j1\log(1-z^j)W_j$ by $-\sif j1z^jW_j$ (the difference is easily seen to be a convergent random series when $|z|<1$) and see immediately that the latter is of covariance $\cov(\sif j1z^jW_j,\sif j1 w^jW_j) = \log(1-zw)$ (see for example \citep[Proposition 2.5]{coste24characteristic}).


One may compute explicitly the covariance function of $\eta_\alpha(z,t)$ using \eqref{eq:eta}, although the formula does not seem to be as explicit and informative as the one of the limit in \eqref{eq:ch p Ewens}. Note also that the formula \eqref{eq:weighted Z_j} is in parallel to \eqref{eq:ch p Ewens}, but a key difference is that now the processes $(\calZ_{\alpha,j})_{j\in\N}$ are correlated, while $(W_j)_{j\in\N}$ are independent. We include a calculation of $\cov(\eta_\alpha(z,1),\eta_\alpha(w,1))$ here.
Let $Q_\alpha$ denote an $\alpha$-Sibuya random variable. That is, an $\N$-valued random variable with 
\[
\proba(Q_\alpha = k) = \frac\alpha{\Gamma(1-\alpha)}\frac{ \Gamma(k-\alpha)}{\Gamma(k+1)}, \quad k\in\N.
\] 
\begin{lemma}For all $z,w\in \D_1$, 
\begin{multline*}
\cov(\eta_\alpha(z,1),\eta_\alpha(w,1))\\
= \Gamma(1-\alpha)\sif u1\sif v1 \frac1{uv}\pp{\esp(z^uw^v)^{Q_\alpha} -2^\alpha\pp{\esp \pp{\frac{z^u+w^v}2}^{Q_\alpha}  - \esp \pp{\frac {z^u}2}^{Q_\alpha} - \esp \pp{\frac{w^v}2}^{Q_\alpha}}}.
\end{multline*}
\end{lemma}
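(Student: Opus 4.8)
\emph{The plan is to} turn the covariance into a single deterministic integral through the Gaussian isometry, and then evaluate that integral termwise by means of a generating-function identity for the $\alpha$-Sibuya law.

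First I would record that $\eta_\alpha(z,1)=\int g_{z}\,\d\calM_\alpha$ with integrand $g_z(r,\omega')=\what\log(1-z^{N'(r)})-\esp'\what\log(1-z^{N'(r)})$, and that $\calM_\alpha$ is a real Gaussian random measure with control measure $\mu(\d r,\d\omega')=\alpha r^{-\alpha-1}\,\d r\,\d\P'$. One checks $g_z\in L^2(\mu)$ (the inner $\P'$-variance is $O(r)$ as $r\to0$ and decays geometrically as $r\to\infty$, both integrable against $r^{-\alpha-1}$ for $\alpha\in(0,1)$), so that $\eta_\alpha(z,1)$ is a centered complex Gaussian variable. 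Applying the isometry to the real and imaginary parts (so that $\cov$ is the bilinear, unconjugated pairing $\esp[\eta_\alpha(z,1)\eta_\alpha(w,1)]$, consistent with the $z^uw^v$ appearing in the statement) and integrating out $\Omega'$ first gives
\[
\cov(\eta_\alpha(z,1),\eta_\alpha(w,1))=\int_0^\infty\alpha r^{-\alpha-1}K(r)\,\d r,\qquad K(r):=\cov_{\P'}\pp{\what\log(1-z^{N'(r)}),\what\log(1-w^{N'(r)})}.
\]

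Next I would compute $K(r)$ from the Poisson law $N'(r)\sim\mathrm{Poisson}(r)$. Expanding $\what\log(1-z^m)=-\sum_{u\ge1}z^{mu}/u$ for $m\ge1$ (the $m=0$ term drops by the $\what\log$ convention) and using $\sum_{m\ge0}e^{-r}r^m x^m/m!=e^{-(1-x)r}$, I obtain, after interchanging the sums,
\[
K(r)=\sum_{u,v\ge1}\frac1{uv}\pp{\pp{e^{-(1-z^uw^v)r}-e^{-r}}-\pp{e^{-(1-z^u)r}-e^{-r}}\pp{e^{-(1-w^v)r}-e^{-r}}}.
\]
The engine for the integration is the identity, valid for $c>0$ and $s\in\D_1$,
\[
\int_0^\infty\alpha r^{-\alpha-1}\pp{e^{-c(1-s)r}-e^{-cr}}\,\d r=c^\alpha\Gamma(1-\alpha)\esp\bb{s^{Q_\alpha}},
\]
which follows from the classical $\int_0^\infty\alpha r^{-\alpha-1}(1-e^{-ar})\,\d r=\Gamma(1-\alpha)a^\alpha$ (for $\rmRe\,a>0$, by analytic continuation from $a>0$) applied at $a=c$ and $a=c(1-s)$, together with the generating function $\esp[s^{Q_\alpha}]=1-(1-s)^\alpha$ of the $\alpha$-Sibuya law. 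Applying it with $c=1,s=z^uw^v$ to the first term, and, after expanding the product into $e^{-(2-z^u-w^v)r}-e^{-(2-z^u)r}-e^{-(2-w^v)r}+e^{-2r}$ and regrouping against $e^{-2r}$, applying it with $c=2$ and $s=(z^u+w^v)/2,\,z^u/2,\,w^v/2$ to the three remaining pieces, the per-$(u,v)$ integral becomes exactly the summand in the statement. Summing against $\frac1{uv}$ then yields the claimed formula.

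\emph{The hard part} is not this algebra but the justification of the three interchanges --- the double $\log$-series, the Poisson sum, and the $r$-integral --- which all rest on absolute convergence. For $z,w\in\D_1$ the summands decay geometrically in $u,v$, and I would verify $\sum_{u,v\ge1}\tfrac1{uv}\int_0^\infty\alpha r^{-\alpha-1}|\cdots|\,\d r<\infty$ by bounding each of the two pieces of the bracket separately: near $r=0$ the bracket vanishes (to order $r$, resp.\ order $r^2$), taming the nonintegrable factor $r^{-\alpha-1}$, while near $r=\infty$ the exponential decay combines with the integrable tail of $r^{-\alpha-1}$; both bounds come out $\lesssim|z|^u|w|^v$, which is summable against $\frac1{uv}$. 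Minor points to treat carefully are the $N'(r)=0$ contribution together with the $\what\log$ convention at $z^{N'(r)}=1$, and the passage to complex $s$ in the Sibuya identity, legitimate since $\rmRe(1-s)\ge1-|s|>0$.
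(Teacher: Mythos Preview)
Your argument is correct and shares the paper's overall strategy: apply the Gaussian isometry to reduce the covariance to a one-dimensional $r$-integral, then evaluate it explicitly and recognize the Sibuya law. The organization differs slightly. The paper expands in the Poisson index $k$ first, integrates $\int_0^\infty r^{k-\alpha-1}e^{-cr}\,\d r$ termwise to produce the Sibuya weights $\alpha\Gamma(k-\alpha)/\Gamma(k+1)$, and then (for the cross term) recognizes a truncated binomial expansion in $\ell=k+k'$. You instead expand the logarithm series in $u,v$ first, sum the Poisson probability generating function to get a closed form for $K(r)$, and apply a single Mellin-type identity $\int_0^\infty\alpha r^{-\alpha-1}(e^{-c(1-s)r}-e^{-cr})\,\d r=c^\alpha\Gamma(1-\alpha)\,\esp[s^{Q_\alpha}]$ that bundles both steps. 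Your packaging is a bit more streamlined and makes the appearance of $2^\alpha$ transparent (it is simply the $c^\alpha$ factor at $c=2$), while the paper's order of operations makes the intermediate expression $\Gamma(1-\alpha)\,\esp[\log(1-z^{Q_\alpha})\log(1-w^{Q_\alpha})]$ visible before unfolding the logarithms.
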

One may re-write the formula using $\esp z^{Q_\alpha} = 1-(1-z)^\alpha$ for all $|z|<1$, but this does not seem to further simplify the covariance function. 
\begin{proof}
By properties of stochastic integrals, 
\begin{align*}
\cov&(\eta_\alpha(z,1),\eta_\alpha(w,1))\nonumber\\
& = \int_0^\infty \pp{\esp\pp{\what\log(1-z^{N(r)})\what\log(1-w^{N(r)})}-\esp\what\log(1-z^{N(r)})\esp \what\log(1-w^{N(r)})}\alpha r^{-\alpha-1}\d r.
\end{align*}
First, 
\begin{align*}
\int_0^\infty & \esp\pp{\what\log(1-z^{N(r)})\what\log(1-w^{N(r)})}\alpha r^{-\alpha-1}\d r
\\
& = \alpha \int_0^\infty \sif k1 \log(1-z^k)\log(1-w^k)\frac{r^k}{k!}e^{-r}r^{-\alpha-1}\d r \\
& = \alpha \sif k1 \log(1-z^k)\log(1-w^k)\frac{\Gamma(k-\alpha)}{\Gamma(k+1)}= \Gamma(1-\alpha)\esp \pp{\log(1-z^{Q_\alpha})\log(1-w^{Q_\alpha})}.
\end{align*}
One may further rewrite the above by using $-\log(1-z) = \sif u1 z^u/u$ and obtain
\begin{align*}
\alpha \sif k1 \log(1-z^k)\log(1-w^k)\frac{\Gamma(k-\alpha)}{\Gamma(k+1)} & = \alpha\sif k1 \sif u1\sif v1\frac1{uv}(z^u)^k(w^v)^k\frac{\Gamma(k-\alpha)}{\Gamma(k+1)}\\
& = \Gamma(1-\alpha)\sif u1\sif v1 \frac1{uv}\esp(z^uw^v)^{Q_\alpha}. 
\end{align*}
Next,
\begin{align*}
\int_0^\infty    \esp&\what\log(1-z^{N(r)})\esp\what\log(1-w^{N(r)})\alpha r^{-\alpha-1}\d r\\
& = \alpha\int_0^\infty \sif{ k,\ell}1 \sif {u,v}1 \frac1{uv} z^{uk}w^{v\ell}\frac{r^k}{k!}\frac{r^\ell}{\ell!}e^{-2r}r^{-\alpha-1}\d r\\
& = \alpha\sif {u,v}1\frac1{uv}\sif\ell2 2^{\alpha-\ell}\summ k1{\ell-1}z^{uk}w^{v(\ell-k)}\binom \ell k \frac{\Gamma(\ell-\alpha)}{\Gamma(\ell+1)}.
\end{align*}
Recognizing the summation $\summ k1{\ell-1}$ as a binomial expansion (without $k=0,\ell$ two terms), we see that the above is the same as
\begin{multline*}
\alpha 2^\alpha\sif {u,v}1\frac1{uv}\sif\ell2 \pp{\pp{\frac{z^u+w^v}2}^\ell - \pp{\frac {z^u}2}^\ell - \pp{\frac{w^v}2}^\ell} \frac{\Gamma(\ell-\alpha)}{\Gamma(\ell+1)}\\
 = \Gamma(1-\alpha)2^\alpha \sif u1\sif v1\frac1{uv}\pp{\esp \pp{\frac{z^u+w^v}2}^{Q_\alpha}  - \esp \pp{\frac {z^u}2}^{Q_\alpha} - \esp \pp{\frac{w^v}2}^{Q_\alpha}}.
\end{multline*}
Combining the above we have proved the stated formula.
\end{proof}
Next, with $z\in\D_1$, by continuous mapping theorem \eqref{eq:log p} implies 
\equh\label{eq:chi convergence}
\wb \chi_n(z):=\frac{\prodd j1n (1-z^j)^{C_{n,j}/n^{\alpha/2}}}{\prodd j1n (1-z^j)^{\esp (C_{n,j}\mid\calP)/n^{\alpha/2}}}\aswto \exp\pp{ \pp{\frac{\mathsf s_{\alpha,\theta}}{\Gamma(1-\alpha)}}^{1/2} \eta_\alpha(z,1)},
\eque
as $n\to\infty$.
A counterpart of this convergence for $(0,1)$-permutation can be found in \citep[Theorem 2.2, with $d=1$]{coste24characteristic}, and therein the convergence is further  enhanced to convergence of functions in $H(\D_1)$, the space of analytical functions on $\D_1$ with the topology induced by the uniform convergence of all compact subsets of $\D_1$. The above convergence an also be strengthened to convergence in $H(\D_1)$  (a.s.w.~with respec to $\calP$) and for this purpose it suffices to establish the tightness as explained in  \citet[Sections 4 and 5.1]{coste24characteristic} (again they dealt with the summation of $d$ i.i.d.~$(0,1)$-permutation matrices), and they reported some technical challenges when extending their tightness control to $(0,\theta)$-ones. We leave the tightness for $(\alpha,\theta)$-permutation matrices to a future study.

\begin{remark}\label{rem:circle}
The most interesting case is to consider   $z\in\partial\D_1$. Obviously, $\log\what\chi_n$ is not well-defined for all rational $z\in\partial\D_1$, and the issue occurs in the real part (the imaginary part can be taken to be bounded within $[-\pi/2,\pi/2]$, and hence Theorem \ref{thm:1} applies). In the case of uniform random permutations, two types of results are known: (a) a central limit theorem holds  for $\log\what\chi_n(z)$ where $z= e^{\i 2\pi\gamma} \in\partial\D_1$ fixed and with $\gamma\in[0,1)$ irrational,  (b) if $z$ is in addition of so-called {\em finite type}, a bivariate central limit theorem holds for both the real and imaginary parts of $\log\what\chi_n(z)$, and the limit is an independent Gaussian vector, and (c) the convergence of $\log\what\chi_n(z)$ as a random generalized function to a logarithmic correlated Poisson analytical function (for (a), (b) see \citep{hambly00characteristic,zeindler13central} and for (c) see \citep{coste24characteristic}). Our approach does not apply to this case yet. The first issue is that the condition we impose for $\calZ_\alpha$ to be well-defined, $|a_j|\le Cj^\beta$, is not suitable for $a_j = {\rm Re}\log(1-z^j)$. On the other hand, our condition may not be necessary for \eqref{eq:calZ} to be a well-defined random variable. The way this issue was overcome in \citep{hambly00characteristic} was to show that the Cesaro sum of $a_j$ tends to zero, which means that despite the fact that $a_j$ is not bounded, statistically it is small most of the time. We leave the behavior of $\what\chi_n(z)$ on the unit circle to a future study. 
\end{remark}
\begin{remark}
Nevertheless, the investigation the fluctuations of characteristic polynomials outside the support of its limit is fruitful \citep{bordenave22convergence,coste24characteristic}. We have not been able to prove the tightness of $\wb\chi_n(z)$ in \eqref{eq:chi convergence} yet, though interesting stochastic processes appear to emerge in the process.  One way to go is to provide estimates on the so-called secular coefficients. Write $\wb C_{n,j} = (C_{n,j} - \esp(C_{n,j}\mid\calP))/n^{\alpha/2}$. Then, we can express the left-hand side as
\equh\label{eq:series}
\wb \chi_n(z) = \prodd j1n (1-z^j)^{\wb C_{n,j}} = \sif k0  \wb \Delta_{n,k}z^k,
\eque
and $(\wb\Delta_{n,k})_{k\in\N_0}$ are referred to as the secular coefficients of $\what\chi_n(z)$. 
The secular coefficients in this case are more complicated than in the case of $(0,\theta)$-permutations. Here, we have $\wb \Delta_{n,0} = 1$, and for $k\in\N$ by generalized Leibniz rule 
\equh\label{eq:Leibniz}
\wb \Delta_{n,k}  = \frac{\d^k}{\d z^k}\wb\chi_n(z)\Bigg\vert_{z=0} = \sum_{\substack{m_1,\dots,m_k\ge 0\\\summ j1n m_j = k}}\binom{k}{m_1,\dots,m_k}\prodd j1k \frac{\d^{m_j}}{\d z^{m_j}}\pp{(1-z^j)^{\wb C_{n,j}}}\Bigg\vert_{z=0}.
\eque
Notice that almost surely, $\wb C_{n,j}\notin\Z$, and if $m_j/j=:\wt m_j\in\N_0$, we have
\[
\frac{\d^{m_j}}{\d z^{m_j}}\pp{(1-z^j)^{\wb C_{n,j}}}\Bigg\vert_{z=0} = m_j!(-1)^{\wt m_j}\binom{\wb C_{n,j}}{\wt m_j},
\]
and otherwise the left-hand side above is zero. Combining the above with \eqref{eq:Leibniz}, we have
\[
\wb \Delta_{n,k}  = \frac{\d^k}{\d z^k}\wb\chi_n(z)\Bigg\vert_{z=0} = k!\sum_{\substack{\wt m_1,\dots,\wt m_k\ge 0\\\summ j1k j\wt m_j = k}}\prodd j1k (-1)^{\wt m_j}\binom{\wb C_{n,j}}{\wt m_j}  \weakto \mathsf P_k\pp{\wt\calZ_{\alpha,1},\dots,\wt\calZ_{\alpha,k}},
\]
where $\wt\calZ_{\alpha,k} := (\mathsf s_{\alpha,\theta}/\Gamma(1-\alpha))^{1/2}\calZ_{\alpha,k}$ and
\[
\mathsf P_k(x_1,\dots,x_k) = k!\sum_{\substack{\wt m_1,\dots,\wt m_k\ge 0\\\summ j1k j\wt m_j = k}}\prodd j1k (-1)^{\wt m_j}\binom{x_j}{\wt m_j}, k\in\N.
\]
Then formally we have (we need to prove the tightness of the series $\wb\chi_n(z)$ in \eqref{eq:series})
\equh\label{eq:1'}
\what\chi_n(z)\aswto 1+\sif k1 \mathsf P_k\pp{\wt \calZ_{\alpha,1},\dots,\wt\calZ_{\alpha,k}}z^k,
\eque
in $H(\D_1)$ with respect to $\calP$. One can check directly that the right-hand above is a different representation of the right-hand side of \eqref{eq:chi convergence}. 
\end{remark}
We conclude with a few further remarks.
\begin{remark}
Recall that the logarithmic potential of the empirical spectral distribution $\mu_n :=n\inv\summ j1n\delta_{\lambda_j(M_n)}$ is
\[
U_{\mu_n}(z):=-\int_\C\log|z-\lambda|\mu_n(\d\lambda), n\in\N.
\]
It is known that $U_{\mu_n}(z)$ converges to 
\[
U(z) = - \frac1{2\pi}\int_0^{2\pi} \log\abs{z-e^{\i \theta}}\d\theta = \begin{cases}
-\log|z|, & \mbox{ if } |z|\ge 1,\\
0, & \mbox{ if } |z|<1,
\end{cases}
\]
as the logarithmic potential of the uniform measure on the unit circle (the expression follows from Jensen's formula).  Here we follow the convention that there is a negative sign in the definition of $U_{\mu_n}$ (see e.g.~\citep{bordenave12around}). Similar to Theorem  \ref{thm:ch poly} we can establish a limit theorem for  $U_{\mu_n}(z)$ with $z\notin\partial\D_1$. We omit the details.
\end{remark}

\begin{remark}One can also show for all $z\in\C$,
\begin{multline}
\pp{\log\what\chi_n\pp{\frac z{n^{\alpha/2}}} - \esp\pp{\log \what\chi_n\pp{\frac z{n^{\alpha/2}}}\mmid\calP}}\\
\aswto z\pp{\frac{\mathsf s_{\alpha,\theta}}{\Gamma(1-\alpha)}}^{1/2}
\int_{\R_+\times\Omega'} \pp{\inddd{N'(r) = 1}-\proba'(N'(r) = 1)}\calM_\alpha(\d r,\d\omega'),\label{eq:zoom in}
\end{multline}
with respect to $\calP$. A corresponding functional central limit theorem can be established to and for the sake of simplicity we omit. 
 We sketch a proof of \eqref{eq:zoom in}. First, 
write the left-hand side as
\equh\label{eq:zoom in 1}
\summ j1n \log\pp{1-\pp{\frac z{n^{\alpha/2}}}^j}\pp{C_{n,j} - \esp\pp{C_{n,j}\mid\calP}}.
\eque
For the summand with $j=1$, write $-\log\pp{1-z/{n^{\alpha/2}}} = \sif k1(z/n^{\alpha/2})^k/k$.
It follows from Theorem \ref{thm:1} that for the term $k=1$,
\[
\frac{z}{n^{\alpha/2}}\pp{C_{n,1}-\esp(C_{n,1}\mid\calP)} 
\]
converges almost surely weakly with respect to $\calP$ to the limit in \eqref{eq:zoom in}, and, applying similar Taylor expansions to other summands in \eqref{eq:zoom in 1}, one can show that $j=1, k=1$ is the only contributing term in \eqref{eq:zoom in 1}. The details are omitted.

The convergence \eqref{eq:zoom in} corresponds to a non-trivial limit when zooming-in near $z = 0$ of the characteristic polynomial. For comparisons, a similar argument above applied to \eqref{eq:Feller} says that for $(0,\theta)$-permutation matrices, $b_n(\what\chi_n(z/b_n))\weakto W_1$ for $b_n\to\infty$ (possibly with a lower bound on the rate imposed).

A more interesting limit theorem concerning zooming-in near $e^{2\pi\i\gamma}$ for an irrational $\gamma\in[0,1)$ for $(0,\theta)$-permutations has been established by \citet{bahier19characteristic}, where the convergence of $\what\chi_n(e^{2\pi \i (\gamma+\delta/n)})$ was proved for $\delta\in\C$. This time, only the large-cycle counts contribute to the limit, and hence our method does not apply to this question for $(\alpha,\theta)$-permutations. This question is left for a future study.
 
\end{remark}

\begin{remark}
For all the discussions regarding characteristic polynomials, one may consider the natural generalization to multiplicative class functions  as investigated by \citet{zeindler13central}. 
\end{remark}

\bibliographystyle{apalike}
\bibliography{../../include/references,../../include/references18}
\end{document}